\theoremstyle{plain}
\newtheorem{thm}{Theorem}[section]
\newtheorem{prop}[thm]{Proposition}
\newtheorem{lem}[thm]{Lemma}
\newtheorem{cor}[thm]{Corollary}
\theoremstyle{remark}
\newtheorem{rem}[thm]{Remark}
\theoremstyle{definition}
\newtheorem{defn}[thm]{Definition}
\theoremstyle{conjecture}
\def\rank{\operatorname{rank}}
\def\min{\operatorname{min}}
\def\Bs{\operatorname{Bs}}
\def\PBs{\operatorname{PBs}}
\def\Pic{\operatorname{Pic}}
\def\det{\operatorname{det}}
\def\NS{\operatorname{NS}}
\def\cL{\mathcal{L}}
\def\cO{\mathcal{O}}
\def\cR{\mathcal{R}}
\title{Indecomposability of the bounded derived categories of Brill-Noether varieties}
\author{Xun Lin, Chenglong Yu}
\email{lin-x18@mails.tsinghua.edu.cn}
\email{yuchenglong@mail.tsinghua.edu.cn}
\begin{document}

\begin{abstract}
We prove that the bounded derived category of coherent sheaves of the Brill-Noether variety $G^{r}_{d}(C)$ that parametrizing linear series of degree $d$ and dimension $r$ on a general smooth projective curve $C$ is indecomposable when $d\leq g(C)-1$.
\end{abstract}
\maketitle

\section{Introduction}
The semi-orthogonal decompositions of the bounded derived category of coherent sheaves have many applications to the geometry of the spaces. As there are many examples of semi-orthogonal decompositions \cite{sod1}\cite{kuznetsov2021semiorthogonal}, a naive question is that whether the bounded derived categories are indecomposable in sense of semi-orthogonal decompositions. The indecomposability is closely related to the geometric space of the Bridgeland stability conditions, see \cite[Section 5]{lin2021nonexistence}, and it is useful in many aspects. For example, if we know an embedding from a nice space to an indecomposable category, then the embedding must be an equivalence. In paper \cite{kawatani2018nonexistence}, the authors show that if the base locus of the canonical divisor is of zero dimension, then the bounded derived category is indecomposable. It is generalized to the singular varieties \cite{spence2021note}. The paper \cite{lin2021nonexistence} shows that if the para-canonical base locus of the canonical divisor is of zero dimension, then the derived category is indecomposable, in particular, it completes the picture of indecomposability of the derived categories of symmetric power of curves. Very recently, F. Caucci study the para-canonical base point of the canonical divisor, connected the indecomposability problems to the generic vanishing theorems, and obtained more examples whose bounded derived categories of coherent sheaves are indecomposable, including Hilbert scheme of points on certain surfaces \cite{caucci2021paracanonical}. Other techniques of studying the indecomposability of derived categories are to consider the families \cite{bastianelli2020indecomposability}, and new notions stably semi-orthogonal indecomposability \cite{pirozhkov2021stably}.

\par

Let $C$ be a smooth projective curve over $\mathbb{C}$. In this paper, we continue to study the indecomposability of derived categories of the Brill-Noether variety $G^{r}_{d}(C)$ which is the $d$-th symmetric power of $C$ when $r=0$. Recall that $G^{r}_{d}(C)$ is the scheme that parametrizes degree $d$ and dimension $r$ linear series on $C$. When $C$ is general, $G^{r}_{d}(C)$ is smooth projective, and of expected dimension. We prove the following.

\begin{thm}\label{mainA}
Let $C$ be a general curve. The bounded derived categories  $D^{b}(G^{r}_{d}(C))$ is indecomposable if $d\leq g(C)-1$.
\end{thm}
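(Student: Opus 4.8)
The plan is to verify the hypothesis of the indecomposability criterion of \cite{lin2021nonexistence}: it suffices to show that the para-canonical base locus satisfies $\dim\PBs(\omega_{G^{r}_{d}})\le 0$. For a general curve $C$ the Gieseker--Petri theorem guarantees that $G^{r}_{d}(C)$ is smooth of the expected dimension, so the criterion applies. The first step is to identify the Albanese variety. The tautological map $u\colon G^{r}_{d}(C)\to \Pic^{d}(C)\cong J$, $(L,V)\mapsto L$, is birational onto its image $W^{r}_{d}(C)$ (over $W^{r}_{d}\setminus W^{r+1}_{d}$ the fibre is a single point), so $G^{r}_{d}(C)$ has maximal Albanese dimension with Albanese $J$. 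Consequently $\Pic^{0}(G^{r}_{d}(C))=u^{*}\widehat{J}$, and every $\alpha\in\Pic^{0}$ is of the form $u^{*}\beta$ with $\beta\in\widehat{J}$.

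Second, I would compute the canonical bundle. Realising $G^{r}_{d}(C)$ as the zero locus of the tautological section of $\mathcal{S}^{\vee}\otimes \rho^{*}E_{1}$ inside the Grassmann bundle $\rho\colon G(r+1,E_{0})\to\Pic^{d}$ attached to a resolution $0\to\pi_{*}\cL\to E_{0}\to E_{1}\to R^{1}\pi_{*}\cL\to 0$ of the Poincar\'e bundle, adjunction together with the relative Euler sequence yields
\[
 \omega_{G^{r}_{d}} \;=\; (\det\mathcal{S}^{\vee})^{\,g-d-1}\otimes u^{*}\cO_{J}\big((r+1)\theta\big),
\]
where $\mathcal{S}$ is the rank $r+1$ tautological subbundle and $\theta$ the principal polarisation (the base factor arising as $(\det E_{1}\otimes\det E_{0}^{-1})^{\otimes(r+1)}=(\det R\pi_{*}\cL)^{-(r+1)}$). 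The class $\xi=c_{1}(\det\mathcal{S}^{\vee})$ is $u$-ample (it restricts to the Pl\"ucker polarisation on each fibre $G(r+1,H^{0}(L))$), while $u^{*}\theta$ is nef. The decisive point is that the coefficient $g-d-1$ of the $u$-ample class is \emph{nonnegative precisely when} $d\le g-1$; this is exactly where the numerical hypothesis of the theorem enters.

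Third, I would show that in fact $\PBs(\omega_{G^{r}_{d}})=\varnothing$. Writing $\alpha=u^{*}\beta$ and $\mathcal{M}=(\det\mathcal{S}^{\vee})^{g-d-1}$, one has $\omega_{G^{r}_{d}}\otimes\alpha=\mathcal{M}\otimes u^{*}\cO_{J}((r+1)\theta)_{\beta}$, and the analysis splits into a \emph{vertical} and a \emph{horizontal} part. On a fibre $F_{L}=G(r+1,H^{0}(L))$ the restriction is $(g-d-1)$ times the Pl\"ucker bundle, hence globally generated on $F_{L}$ because $g-d-1\ge 0$; the positive-dimensional fibres, which occur over $W^{r+1}_{d}$, are thereby covered. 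In the horizontal direction the translates $\cO_{J}((r+1)\theta)_{\beta}$ move freely and separate distinct fibres. To glue these two statements I would push forward to $J$, where $u_{*}\omega_{G^{r}_{d}}=u_{*}\mathcal{M}\otimes\cO_{J}((r+1)\theta)$. A relative generic vanishing statement for the $u$-nef bundle $\mathcal{M}$ should show that $u_{*}\mathcal{M}$ is a $GV$-sheaf on $J$; tensoring a $GV$-sheaf by the ample bundle $\cO_{J}((r+1)\theta)$ then makes $u_{*}\omega_{G^{r}_{d}}$ $M$-regular, hence continuously globally generated, by Pareschi--Popa. Therefore, for $L\in W^{r}_{d}$ and generic $\beta$, the sections $H^{0}(G^{r}_{d},\omega\otimes u^{*}\beta)=H^{0}(J,u_{*}\omega\otimes\beta)$ generate the fibre of $u_{*}\omega$ at $L$ and, by cohomology and base change, restrict onto $H^{0}(F_{L},\omega|_{F_{L}})$, which is base-point-free; thus no point of $F_{L}$ lies in $\PBs(\omega_{G^{r}_{d}})$, and the para-canonical base locus is empty.

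I expect the main obstacle to be this last gluing step, namely establishing the $M$-regularity (equivalently, the continuous global generation) of $u_{*}\omega_{G^{r}_{d}}$ strongly enough to kill base points on the positive-dimensional Albanese fibres over $W^{r+1}_{d}$. The subtlety is twofold: one must produce the generic vanishing for the relatively nef bundle $\mathcal{M}$ and propagate it through the ample twist $(r+1)\theta$ using only the relative positivity of $\xi$, and one must control cohomology and base change across the jumping stratification $W^{r}_{d}\supset W^{r+1}_{d}\supset\cdots$ where the fibre Grassmannians change dimension; here Koll\'ar's torsion-freeness and vanishing theorems for $R^{i}u_{*}\omega_{G^{r}_{d}}$ should suffice. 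Once this is in place the criterion of \cite{lin2021nonexistence} gives the indecomposability of $D^{b}(G^{r}_{d}(C))$ for all $d\le g-1$.
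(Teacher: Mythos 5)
Your overall strategy coincides with the paper's: both reduce Theorem \ref{mainA} to showing $\PBs\vert\omega_{G^{r}_{d}}\vert=\varnothing$ and then invoke \cite[Corollary 1.6]{lin2021nonexistence}, and your canonical bundle formula $\omega=(\det\mathcal{S}^{\vee})^{g-d-1}\otimes u^{\ast}\cO_{J}((r+1)\Theta)$ agrees with Proposition \ref{Chernd}; the treatment of the pulled-back summand $(r+1)\Theta$ (an effective class on an abelian variety has empty para-canonical base locus) is also the same, as is the observation that $d\le g-1$ enters through the sign of the coefficient $g-d-1$. The divergence --- and the gap --- is in how you remove base points from the relative Pl\"ucker part. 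The paper does this by an explicit, elementary construction: after normalizing the Poincar\'e bundle so that $L_{p}=i_{p}^{\ast}\cL(\Gamma)\in\Pic^{0}(J)$ for every $p\in C$, evaluation at $p$ defines a section $s_{p}$ of $E^{\vee}\otimes L_{p}$, and a Vandermonde-type general position lemma (Lemma \ref{lemma: linear indepdence of sections on points}) shows that $g$ general points of $C$ produce sections generating every fibre of $E^{\vee}$ up to $\Pic^{0}$-twists. This ``algebraic global generation'' of $E^{\vee}$ passes to $\wedge^{r+1}E^{\vee}$ and yields $\PBs\vert H\vert=\varnothing$ already on the ambient Grassmann bundle, so no gluing over the stratification $W^{r}_{d}\supset W^{r+1}_{d}\supset\cdots$ is ever needed.

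Your substitute for this step is not a proof. To make $u_{\ast}\omega$ $M$-regular by tensoring with the ample $(r+1)\Theta$ you would first need $u_{\ast}\mathcal{M}$ to be a $GV$-sheaf; but $\mathcal{M}=\omega\otimes u^{\ast}\cO_{J}(-(r+1)\Theta)$ is the canonical bundle twisted by the pullback of an \emph{anti}-ample class, so the Hacon/Pareschi--Popa generic vanishing theorems for $R^{i}u_{\ast}\omega$ do not apply to it, and no ``relative generic vanishing for a $u$-nef bundle'' of the required strength is available off the shelf --- you only assert that it ``should'' hold. Moreover, even granting $M$-regularity (hence continuous global generation) of $u_{\ast}\omega$, that controls only the fibre $(u_{\ast}\omega)\otimes k(L)$ of the sheaf; to exclude base points on a positive-dimensional fibre $F_{L}$ over $W^{r+1}_{d}$ you still need surjectivity of the restriction $H^{0}(G^{r}_{d},\omega\otimes u^{\ast}\beta)\to H^{0}(F_{L},\omega\vert_{F_{L}})$ across the jumping strata, which you defer to Koll\'ar's theorems without verification. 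Since you yourself identify this gluing as the main obstacle, the argument as written does not close; the paper's pointwise construction of the sections $s_{p}$ is precisely the missing ingredient, and it bypasses the entire generic-vanishing machinery.
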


Very recently, Toda proved the Quot scheme formula \cite[Theorem 1.1]{toda2021derived} that was conjectured by Jiang \cite{jiang2021derived}, and obtained semi-orthogonal decompositions of derived categories of $G^{r}_{d}(C)$ as special cases.

\begin{prop}\label{quotscheme}\cite[Corollary 1.5]{toda2021derived}
Let $C$ be a general curve. There is a semi-orthogonal decomposition if $\delta\geq 0$,
$$D^{b}(G^{r}_{g-1+\delta}(C))=\Big\langle \tbinom{\delta}{i}\ \text{-copies of}\  D^{b}(G^{r-i}_{g-1-\delta}(C)),\ 0\leq i\leq \min(\{\delta,r+1\})\Big\rangle.$$
\end{prop}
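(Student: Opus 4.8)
The plan is to deduce the statement from Toda's Quot scheme formula \cite[Theorem 1.1]{toda2021derived} by realizing every Brill--Noether variety in sight as the Quot scheme of a single two-term perfect complex on a Picard torus. Fix a Poincar\'e line bundle $\mathcal{P}$ on $C\times\Pic^{g-1+\delta}(C)$ and let $\pi$ be the projection to $\Pic^{g-1+\delta}(C)$. Since $C$ is a curve, $\mathcal{G}:=R\pi_*\mathcal{P}$ is a perfect complex of tor-amplitude $[0,1]$ whose virtual rank is the Euler characteristic $\chi(L)=\deg L+1-g=\delta$ at degree $d=g-1+\delta$. The first step is to identify $G^{r}_{g-1+\delta}(C)$ with the Quot scheme of $\mathcal{G}$, in its incarnation as the derived Grassmannian $\mathrm{Grass}(r+1,\mathcal{G})$ parametrizing $(r+1)$-dimensional subspaces of sections: a point is a line bundle $L\in\Pic^{g-1+\delta}(C)$ together with an $(r+1)$-dimensional subspace $V\subseteq H^{0}(L)$ inside the fibre of $\mathcal{G}$, which is exactly the datum of a linear series of degree $g-1+\delta$ and dimension $r$. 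For $C$ general the Brill--Noether variety is smooth of the expected dimension $\rho$, so this Quot scheme carries the trivial derived structure and its $D^{b}$ is the honest derived category of a smooth projective variety; recording this genericity is what lets Toda's formula apply verbatim.

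The second step supplies the geometric input that turns the dual side of Toda's formula back into Brill--Noether varieties. Serre duality on $C$ together with the isomorphism $\Pic^{g-1+\delta}(C)\xrightarrow{\sim}\Pic^{g-1-\delta}(C)$, $L\mapsto K_{C}\otimes L^{-1}$, identifies the shifted dual complex $\mathcal{G}^{\vee}[1]$ (up to a line-bundle twist that does not change the Quot scheme) with the analogous pushforward complex $\mathcal{G}'=R\pi_*\mathcal{P}'$ on $\Pic^{g-1-\delta}(C)$. Concretely $H^{0}(L)^{\vee}\cong H^{1}(K_{C}\otimes L^{-1})$ and $H^{1}(L)^{\vee}\cong H^{0}(K_{C}\otimes L^{-1})$, and the virtual ranks match since $\chi(K_{C}\otimes L^{-1})=-\delta$. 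Under this identification the Quot scheme of $\mathcal{G}^{\vee}[1]$ cut out by the rank-$(r-i+1)$ condition parametrizes line bundles $M$ of degree $g-1-\delta$ with an $(r-i+1)$-dimensional space of sections, i.e. it is $G^{r-i}_{g-1-\delta}(C)$; matching these numerical invariants is how the index $i$ and the complementary degree $g-1-\delta$ enter.

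With these identifications the third step is to invoke \cite[Theorem 1.1]{toda2021derived} for the complex $\mathcal{G}$ of virtual rank $\delta\geq 0$. The theorem produces a semiorthogonal decomposition of $D^{b}$ of the Quot scheme of $\mathcal{G}$ whose blocks are copies of $D^{b}$ of the Quot schemes of $\mathcal{G}^{\vee}[1]$, indexed by the combinatorial data (weights, equivalently Young diagrams in a box) of the formula. By Steps 1 and 2 these blocks are exactly the $D^{b}(G^{r-i}_{g-1-\delta}(C))$. It remains to read off the two numerical features: the range of $i$ is dictated by non-emptiness of the dual block, namely $r-i\geq -1$ and $i\leq\delta$, giving $0\leq i\leq\min(\{\delta,r+1\})$; and the multiplicity of the $i$-th block is the rank of the corresponding Schur-functor contribution, which in this corank configuration collapses to the binomial coefficient $\tbinom{\delta}{i}$. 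A consistency check is $r=0$, where $G^{0}_{g-1+\delta}(C)=C^{(g-1+\delta)}$ and the formula returns the known decomposition of the symmetric product into one copy of $C^{(g-1-\delta)}$ and $\delta$ copies of $\Pic^{g-1-\delta}(C)=G^{-1}_{g-1-\delta}(C)$.

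I expect the main obstacle to be twofold and to lie entirely in Steps 2 and 3. First, pinning down the Serre-duality identification $\mathcal{G}^{\vee}[1]\cong\mathcal{G}'$ with the correct twist, so that the dual Quot schemes are genuinely the complementary-degree Brill--Noether varieties rather than twisted variants, requires care with the Poincar\'e normalization and with base change for $R\pi_*$. Second, and more seriously, extracting the exact multiplicities $\tbinom{\delta}{i}$ from the general (representation-theoretically indexed) blocks of Toda's formula is a genuine combinatorial computation: one must show that the a priori Schur-functor multiplicities degenerate to binomials in precisely the range $0\leq i\leq\min(\{\delta,r+1\})$, and simultaneously verify that the genericity of $C$ removes all obstruction and derived corrections, so that every block really is the derived category of a smooth Brill--Noether variety of the expected dimension.
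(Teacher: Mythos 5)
You should note at the outset that the paper contains no proof of this proposition: it is quoted verbatim from Toda \cite[Corollary 1.5]{toda2021derived}, and its role here is purely as context --- the paper's actual contribution (Theorem \ref{mainA}) is that the blocks appearing in this decomposition are themselves indecomposable. So the only meaningful comparison is with Toda's own derivation of his Corollary 1.5 from his Theorem 1.1, and your outline is essentially that derivation: realize $G^{r}_{g-1+\delta}(C)$ as the derived Grassmannian of $(r+1)$-dimensional subspaces of the rank-$\delta$ complex $\mathcal{G}=R\pi_{*}\mathcal{P}$ on $\Pic^{g-1+\delta}(C)$ (in the notation of Section 4 of this paper, $\mathcal{G}$ is represented by the two-term complex $[E\xrightarrow{\gamma}F]$, and the subspace condition $W\subset\ker\gamma_{x}$ is precisely the definition of $X'_{g-r-1}(\gamma)$), then use Grothendieck duality along $C\times\Pic(C)\to\Pic(C)$ together with $L\mapsto K_{C}\otimes L^{-1}$ to identify $\mathcal{G}^{\vee}[1]$, up to a line-bundle twist that leaves the Grassmannian unchanged, with the analogous complex in degree $g-1-\delta$, and apply the Quot scheme formula. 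Your $r=0$ consistency check is also correct.

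Two corrections to your assessment of where the difficulty lies. First, the obstacle you anticipate in Step 3 is not there: Toda's Theorem 1.1 is exactly Jiang's conjectured formula \cite{jiang2021derived} and already carries the multiplicities $\tbinom{\ell}{i}$ and the range $0\leq i\leq \min(\{\ell,d\})$ as part of its statement; taking $\ell=\delta$ (rank of $\mathcal{G}$) and $d=r+1$ (rank of the subspaces) yields $\tbinom{\delta}{i}$ and $0\leq i\leq\min(\{\delta,r+1\})$ immediately, with no Schur-functor degeneration and no non-emptiness analysis of the dual blocks (blocks given by empty Brill--Noether loci simply contribute zero categories, which is harmless). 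Second, the role of genericity is slightly misplaced in your write-up: Toda's semi-orthogonal decomposition holds at the derived/Quot-scheme level for any $C$; what Gieseker's smoothness theorem and the Griffiths--Harris dimension count buy you is only the identification of each (derived) Quot scheme with the classical smooth projective variety $G^{r-i}_{g-1-\delta}(C)$, so that each block is an honest $D^{b}$ of a smooth projective variety. With those two points repaired, your sketch is a faithful reconstruction of the cited argument; the remaining work (Poincar\'e normalization, base change for $R\pi_{*}$, the duality twist) is routine bookkeeping, exactly as you flagged.
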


The Theorem \ref{mainA} implies that the components in the decompositions of Proposition \ref{quotscheme} are indecomposable. We complete the picture of the indecomposability of bounded derived categories of $G^{r}_{d}(C)$ for general curve $C$.

\par

Our method follows from paper \cite{lin2021nonexistence}. Firstly we write down a first Chern class formula in Section 2. Since $G^{r}_{d}(C)$ is a special case of the canonical blow-up of the determinantal varieties, we start with a more general setting, namely, we compute a first Chern class formula for the canonical blow-up of the determinantal varieties in Section 2, see Proposition \ref{Chernd}.

\par

In Section 3, we analyze the deformation property of the relative hyperplane class which appears in the first Chern class formula. We relates the deformation of the relative hyperplane to an algebraically global generations of vector bundles that is a weaker notion of continuous globally generated \cite{PP}, see Proposition \ref{algdeform}.
Some other general theorems of indecomposability of derived categories are obtained, we hope that there will be more examples, see Theorem \ref{indecompoA} and Corollary \ref{indecompoB}.

\par

In Section 4, we study the examples $G^{r}_{d}(C)$, and prove the main Theorem \ref{mainA}. The key point is an algebraically global generation of dual of the push forward of certain Poincar\'e bundle. The method is to reduce the original problem of indecomposability to the concrete problems of linear series.

\section{The first Chern class formula}
In this section, we compute the first Chern class formula for the canonical blow-up of the determinantal varieties under some smoothness and dimension assumptions. The main examples are the classical Brill-Noether varieties.

\par

Let $X$ be a smooth projective variety, and $(E,F,\gamma)$ be a pair of vector bundles of rank $n$ and $m$ with bundle morphism $\gamma$. Define $X_{k}(\gamma)$ as the $k$-th degeneracy loci of $\gamma$, namely the subvariety of $X$ which parametrizes the the points $x$ such that the rank of $\gamma_{x}$ is less than or equal to $k$. Consider a closed subscheme of the relative Grassmanian $G(n-k, E)$ which parametrizes the pair $(x,W)$ that $x\in X_{k}(\gamma)$, $W\subset \operatorname{Ker}(\gamma_{x})$. We denote this variety as $X'_{k}(\gamma)$ which is usually called the canonical blow-up of $X_{k}(\gamma)$. We always assume that $X'_{k}(\gamma)$ is smooth and of expected dimension. For the precise definitions, see \cite[Chap II]{arbarello2013geometry}.

$$\xymatrix@C3cm{G(n-k,E)\ar[r]^{p}&X\\
X'_{k}(\gamma)\ar[ru]^{u}\ar[u]^{j}&}$$

Consider the Pl$\ddot{\text u}$cker embedding $G(n-k,E)\hookrightarrow \mathbb{P}(\wedge^{n-k}E)$. Let $H$ be the canonical relative hyperplane section of $\mathcal{O}(1)$ on
$\mathbb{P}(\wedge^{n-k}E)$.

\begin{prop}\label{Chernd}
The first Chern class formula of $X'_{k}(\gamma)$ is
$$-C_{1}(X'_{k}(\gamma))=(\rank{F}-\rank{E})\cdot H\vert_{X'_{k}(\gamma)} + u^{\ast}(-C_{1}(X) + (n-k)\cdot(C_{1}(F)-C_{1}(E))).$$
\end{prop}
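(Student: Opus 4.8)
The plan is to realize $X'_{k}(\gamma)$ as the zero locus of a natural section of a vector bundle on the relative Grassmannian $G(n-k,E)$, and then to extract the first Chern class from two standard short exact sequences: the conormal sequence of the zero locus, and the relative tangent sequence of the Grassmannian bundle.

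First I would set up the tautological data on $p\colon G(n-k,E)\to X$: let $S\subset p^{\ast}E$ be the universal subbundle of rank $n-k$ and $Q=p^{\ast}E/S$ the universal quotient of rank $k$. Composing the inclusion with $p^{\ast}\gamma$ produces a morphism $S\hookrightarrow p^{\ast}E\xrightarrow{p^{\ast}\gamma}p^{\ast}F$, i.e. a section $\sigma$ of $S^{\vee}\otimes p^{\ast}F$. By the definition of $X'_{k}(\gamma)$, the condition $W\subset\operatorname{Ker}(\gamma_{x})$ is exactly the vanishing of $\sigma$, so $X'_{k}(\gamma)$ is the zero scheme of $\sigma$. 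A dimension count (over a generic point of $X_{k}(\gamma)$ the fibre of $u$ is the single point $\operatorname{Ker}(\gamma_{x})$, so $\dim X'_{k}(\gamma)=\dim X-(n-k)(m-k)$) shows that $X'_{k}(\gamma)$ has codimension $(n-k)\cdot m=\rank(S^{\vee}\otimes p^{\ast}F)$ in $G(n-k,E)$. Under the standing smoothness and expected-dimension hypotheses, $\sigma$ is therefore a regular section, and its normal bundle is $N=(S^{\vee}\otimes p^{\ast}F)\vert_{X'_{k}(\gamma)}$.

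Next I would run the Chern class bookkeeping. The conormal sequence $0\to N^{\vee}\to j^{\ast}\Omega_{G(n-k,E)}\to\Omega_{X'_{k}(\gamma)}\to 0$ gives $-C_{1}(X'_{k}(\gamma))=j^{\ast}(-C_{1}(G(n-k,E)))+C_{1}(N)$. For the tangent bundle of the Grassmannian bundle I use $T_{G/X}=S^{\vee}\otimes Q$ together with $0\to T_{G/X}\to T_{G(n-k,E)}\to p^{\ast}T_{X}\to 0$, which yields $C_{1}(G(n-k,E))=n\cdot C_{1}(S^{\vee})+(n-k)\,p^{\ast}C_{1}(E)+p^{\ast}C_{1}(X)$ after expanding $C_{1}(S^{\vee}\otimes Q)$ and using $C_{1}(Q)=p^{\ast}C_{1}(E)+C_{1}(S^{\vee})$. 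For the normal bundle, $C_{1}(N)=m\cdot C_{1}(S^{\vee})+(n-k)\,u^{\ast}C_{1}(F)$. Finally, the Pl\"ucker embedding identifies $\mathcal{O}(1)$ with $\det S^{\vee}$, so $H\vert_{G(n-k,E)}=C_{1}(S^{\vee})$; substituting this and combining the two computations collapses the $C_{1}(S^{\vee})$ contributions to the coefficient $m-n=\rank F-\rank E$ and assembles the remaining $u^{\ast}$ terms into the claimed expression.

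The Chern class algebra is routine; the step requiring care — and the main conceptual input — is the identification of $X'_{k}(\gamma)$ with the regular zero locus of $\sigma$ and the consequent computation of $N$. This is where the smoothness and expected-dimension assumptions are essential: they guarantee that the codimension equals $\rank(S^{\vee}\otimes p^{\ast}F)$, so that the Koszul complex of $\sigma$ is a resolution and $N\cong(S^{\vee}\otimes p^{\ast}F)\vert_{X'_{k}(\gamma)}$ rather than merely a quotient of it. I would also verify the Pl\"ucker sign convention for $\mathcal{O}(1)$, since the opposite convention would flip the coefficient of $H$.
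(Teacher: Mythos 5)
Your proposal is correct and follows essentially the same route as the paper: realizing $X'_{k}(\gamma)$ as the zero locus of the section of $S^{\vee}\otimes p^{\ast}F$ induced by $S\hookrightarrow p^{\ast}E\to p^{\ast}F$, identifying the normal bundle via the smoothness and expected-dimension hypotheses, and combining the relative tangent sequence of $G(n-k,E)$ with the (co)normal sequence to assemble the first Chern class. The only cosmetic difference is that you phrase the last step via the conormal sequence and add a dimension count and a remark on the Pl\"ucker convention, which the paper leaves implicit.
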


\begin{proof}
We write $S$ as the tautological vector bundle on $G(n-k,E)$, and $Q$ as the universal quotient bundle. There is a natural exact sequence of
vector bundles
$$0\rightarrow S\rightarrow p^{\ast}E\rightarrow Q\rightarrow 0.$$
Applying $\otimes S^{\vee}$, we get an exact sequence of vector bundles
$$0\rightarrow S^{\vee}\otimes S\rightarrow S^{\vee}\otimes p^{\ast}E\rightarrow S^{\vee}\otimes Q\rightarrow 0.$$
We denote the last term $S^{\vee}\otimes Q$ as $M$, which is the vector bundle of tangent directions on the fibers of $p$.
There is a tangent bundle formula
$$0\rightarrow M\rightarrow T_{G(n-k,E)}\rightarrow p^{\ast}T_{X}\rightarrow 0.$$
We have $$C_{1}(M)=-C_{1}(S^{\vee}\otimes S)+ C_{1}(S^{\vee}\otimes p^{\ast}E)=C_{1}(S^{\vee}\otimes p^{\ast}E).$$
The second equality is because $C_{1}(S^{\vee}\otimes S)=\rank(S)\cdot C_{1}(S^{\vee})+\rank(S)\cdot C_{1}(S)=0$. \\
On the other hand,
$$C_{1}(T_{G(n-k,E)})=C_{1}(M)+p^{\ast}C_{1}(T_{X}).$$
Now we relate the Chern class of $G(n-k,E)$ with the Chern class of $X'_{k}(\gamma)$ by the normal bundle formula
$$0\rightarrow T_{X'_{k}(\gamma)}\rightarrow T_{G(n-k,E)}\vert_{X'_{k}(\gamma)}\rightarrow N_{X'_{k}(\gamma)\vert G(n-k,E)}\rightarrow 0.$$
Consider the morphisms of vector bundles
$$\xymatrix{S\ar[r]^{\hookrightarrow}&p^{\ast}E\ar[r]^{p^{\ast}\gamma}&p^{\ast}F}.$$
Denote the composition of morphisms of the vector bundles as $p^{\ast}\gamma$ again. By definition $X'_{k}(\gamma)$
is the zeros of the section $s\in \Gamma(S^{\vee}\otimes p^{\ast}F)$ that induced from $p^{\ast}\gamma$. Since we assume that $X'_{k}(\gamma)$ is of expected dimension and smooth, we have
$$N_{X'_{k}(\gamma)\vert G(n-k,E)}\cong S^{\vee}\otimes p^{\ast}F\vert_{X'_{k}(\gamma)}.$$
Therefore, we have Chern class formula
$$j^{\ast}C_{1}(G(n-k,E))=C_{1}(X'_{k}(\gamma))+j^{\ast}C_{1}(S^{\vee}\otimes p^{\ast}F).$$
Combining the formulas above, we get

\begin{align*}
    C_{1}(X'_{k}(\gamma))=& j^{\ast}C_{1}(G(n-k,E))-j^{\ast}C_{1}(S^{\vee}\otimes p^{\ast}F)\\
                         =& j^{\ast}(C_{1}(M)+p^{\ast}C_{1}(X)-C_{1}(S^{\vee}\otimes p^{\ast}F))\\
                         =& j^{\ast}(C_{1}(S^{\vee}\otimes p^{\ast}E)+p^{\ast}C_{1}(X)-C_{1}(S^{\vee}\otimes p^{\ast}F))\\
                         =& \rank(E)\cdot j^{\ast}C_{1}(S^{\vee})+\rank(S)\cdot u^{\ast}C_{1}(E)+ u^{\ast}C_{1}(X) - \rank(S)\cdot u^{\ast}C_{1}(F) \\
                         \ & - \rank(F)\cdot j^{\ast}C_{1}(S^{\vee}) \\
                         =& (\rank(E)-\rank(F))\cdot j^{\ast}C_{1}(S^{\vee})+ u^{\ast}C_{1}(X) + (n-k)\cdot u^{\ast}(C_{1}(E)-C_{1}(F)) \\
\end{align*}
Since $j^{\ast}C_{1}(S^{\vee})= H\vert_{X'_{k}(\gamma)}$,
therefore
$$-C_{1}(X'_{k}(\gamma))= (\rank(F)-\rank(E))\cdot H\vert_{X'_{k}(\gamma)}+ u^{\ast}(-C_{1}(X) + (n-k)\cdot(C_{1}(F)-C_{1}(E))).$$
\end{proof}

\section{The deformations of the relative hyperplane}
Let $E$ be a vector bundle of $\rank n$ on a smooth projective  variety $X$. Write $(\mathbb{P}(E),\mathcal{O}(1))$ as projective bundle of $E$ with the canonical universal bundle $\mathcal{O}(1)$.
$$p: \mathbb{P}(E)\longrightarrow X.$$

\par

 Motivated by the Chern class formula in Section 2, we study the algebraic class of $\mathcal{O}(1)$. Following paper \cite{lin2021nonexistence}, we write the notation $\PBs\vert L\vert$ for base point of the algebraic linear systems of $L$. Note here that the base point is independent of the choice of line bundle in an algebraic class. For convenient, we always define the base point of a class. Clearly, any algebraically trivial line bundle on $\mathbb{P}(E)$ restricts to be trivial on the fibers of $p$, hence must be pulled back of algebraically trivial line bundle on $X$. For linear series of $\mathcal{O}(1)$, we have a classical theorem.

\begin{prop}\label{ratdeform}
The following statements are equivalent,
\begin{enumerate}
    \item $\Bs\vert\mathcal{O}(1)\vert=\varnothing$.
    \item $E^{\vee}$ is globally generated.
\end{enumerate}
\end{prop}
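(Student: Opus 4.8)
The plan is to unwind the definitions on both sides and match global sections of $\mathcal{O}(1)$ with a natural space of sections on $X$. The starting point is the projection formula: since $p_*\mathcal{O}(1) \cong E^\vee$ (with the convention that $\mathbb{P}(E)$ parametrizes one-dimensional quotients of $E$, so that $\mathcal{O}(1)$ restricts to the hyperplane bundle on each fiber and pushing forward recovers $E^\vee$), and since $R^i p_* \mathcal{O}(1) = 0$ for $i>0$, the Leray spectral sequence collapses to give a canonical isomorphism $H^0(\mathbb{P}(E),\mathcal{O}(1)) \cong H^0(X, E^\vee)$. I would first record this identification carefully, being explicit about the convention for $\mathbb{P}(E)$, because the entire equivalence is really just a fiberwise translation of this isomorphism.

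With this in hand, the core of the proof is a pointwise comparison. Fix a point $y \in \mathbb{P}(E)$ lying over $x = p(y) \in X$, and note that $y$ corresponds to a one-dimensional quotient $E_x \twoheadrightarrow k(x)$, equivalently a line in $E_x^\vee$, equivalently a nonzero functional $\lambda_y$ on the fiber $E_x$. A section $s \in H^0(\mathbb{P}(E),\mathcal{O}(1))$, viewed as an element $\sigma \in H^0(X,E^\vee)$, vanishes at $y$ if and only if the value $\sigma(x) \in E_x^\vee$ lies in the hyperplane of $E_x^\vee$ determined by $y$ — more precisely, $s(y)=0$ exactly when $\sigma(x)$, evaluated against the functional picking out $y$, vanishes. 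Thus $y$ is a base point of $|\mathcal{O}(1)|$ if and only if \emph{every} global section $\sigma$ has $\sigma(x)$ landing in that one hyperplane. I would make this clean by observing that the evaluation map $H^0(X,E^\vee) \to E_x^\vee$ has image equal to the span of the values $\{\sigma(x)\}$, and $y$ is a base point iff this image is contained in the hyperplane corresponding to $y$.

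From this the equivalence falls out by quantifying over the fiber. Suppose (2) holds, so $E^\vee$ is globally generated; then for each $x$ the evaluation map $H^0(X,E^\vee) \to E_x^\vee$ is surjective, so its image cannot be contained in any proper hyperplane, hence no $y$ over $x$ is a base point, and varying $x$ gives $\Bs|\mathcal{O}(1)| = \varnothing$, which is (1). Conversely, suppose (2) fails, so there is a point $x$ where the evaluation map is not surjective; its image then lies in some hyperplane of $E_x^\vee$, which corresponds to a point $y \in \mathbb{P}(E)_x$, and every section vanishes at $y$, so $y \in \Bs|\mathcal{O}(1)|$, contradicting (1). This closes the loop.

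I expect the only real subtlety to be bookkeeping around the $\mathbb{P}(E)$ convention and the duals: whether $\mathbb{P}(E)$ denotes lines or one-dimensional quotients determines whether $p_*\mathcal{O}(1)$ is $E$ or $E^\vee$, and getting this backwards would turn the criterion into global generation of $E$ rather than $E^\vee$. Since the statement asserts the criterion is on $E^\vee$, I would fix the quotient convention at the outset and check it against $p_*\mathcal{O}(1) \cong E^\vee$, treating that identification as the hinge of the argument. Everything else is the standard dictionary between global generation of a bundle and basepoint-freeness of the relative $\mathcal{O}(1)$, so no genuinely hard estimate or geometric input is needed — the work is entirely in fixing conventions and reading off the fiberwise statement correctly.
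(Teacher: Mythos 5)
Your overall strategy is the same as the paper's: identify $H^{0}(\mathbb{P}(E),\mathcal{O}(1))$ with $H^{0}(X,E^{\vee})$ via the projection formula and then translate base-point-freeness fiberwise into surjectivity of the evaluation map $H^{0}(X,E^{\vee})\to E^{\vee}_{x}$. Your treatment of $(1)\Rightarrow(2)$ by contrapositive --- if evaluation at $x$ is not surjective, its image lies in a hyperplane of $E^{\vee}_{x}$, which is the annihilator of a line in $E_{x}$ and hence a point of the fiber at which every section vanishes --- is in fact cleaner than the paper's argument, which builds $n$ linearly independent sections by an explicit induction on nested hyperplanes. The mathematical content is sound.

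However, the one place you yourself flagged as the hinge is exactly where the writeup slips. You declare the quotient convention, ``$\mathbb{P}(E)$ parametrizes one-dimensional quotients of $E$,'' and assert that this gives $p_{*}\mathcal{O}(1)\cong E^{\vee}$. It does not: under the quotient convention $\mathbb{P}(E)=\operatorname{Proj}(\operatorname{Sym}E)$ one has $p_{*}\mathcal{O}(1)\cong E$, and the criterion you would then derive is global generation of $E$, not of $E^{\vee}$. The convention the paper actually uses (visible in its proof, where a point of the fiber is a vector $v\in\mathbb{P}(E_{x})$ on which sections of $E^{\vee}$ are evaluated) is the classical one: $\mathbb{P}(E)$ parametrizes lines in $E$, i.e.\ $\operatorname{Proj}(\operatorname{Sym}E^{\vee})$, and then $p_{*}\mathcal{O}(1)\cong E^{\vee}$ as required. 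Two of your phrases betray the mismatch: ``the hyperplane of $E^{\vee}_{x}$ determined by $y$'' only makes sense if $y$ is a line in $E_{x}$ (the hyperplane being its annihilator), since a line in $E^{\vee}_{x}$ --- which is what a one-dimensional quotient of $E_{x}$ amounts to --- determines no hyperplane of $E^{\vee}_{x}$; and ``evaluating $\sigma(x)$ against the functional $\lambda_{y}$'' pairs two elements of $E^{\vee}_{x}$, which is undefined. Once you switch to the subspace convention, every step you wrote becomes literally correct and the proof goes through.
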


\begin{proof}
There is an isomorphism by projection formula,
$$H^{0}(\mathbb{P}(E),\mathcal{O}(1))\cong H^{0}(X, E^{\vee}).$$

$(1)\implies (2):$ Given any closed point $x\in X$, since $\mathcal{O}(1)$ is base point free, there is a section $s_{1}$ such that $s_{1}\vert_{p^{-1}x}$ define a nontrivial hyperplane $H_{1,x}$ of $\mathbb{P}(E_{x})$. Choose a non-zero vector $v\in H_{1,x}$, again since $\mathcal{O}(1)$ is base point free, there must be a section $s_{2}$ defining a hyperplane $H_{2,x}$ of $\mathbb{P}(E_{x})$ such that $v\notin H_{2,x}$. The next step is to choose vector in $H_{2,x}\cap H_{1,x}$, and then get $H_{3,x}$. By induction, we get sections $\langle s_{1}, \cdots , s_{n} \rangle$. They are linearly independent in $E^{\vee}_{x}$. Suppose not, then assume not all zero $\alpha_{i}$ define
$$\sum \alpha_{i}\cdot s_{i,x}=0.$$
By construction, take $v\in \cap_{\leq n-1} H_{i,x}$, then $s_{i,x}(v)=0$ for $i\leq n-1$, but $s_{n,x}(v)\ne 0$, hence $\alpha_{n}=0$. Similarly, $$\alpha_{1}=\alpha_{2}=\cdots =\alpha_{n-1}=0,$$
a contradiction. Thus $\langle s_{1}, \cdots , s_{n} \rangle$ generates $E^{\vee}_{x}$.
\par
$(2)\implies (1):$ Given any point $(x,v)$ such that $v\in \mathbb{P}(E_{x})$, since $E^{\vee}$ is globally generated, there are sections $\langle s_{1}, \cdots , s_{n}\rangle$ of $E^{\vee}$ that generate $E^{\vee}_{x}$. Therefore there is a section $s_{j}$ such that $s_{j,x}(v)\ne 0$. This implies $\mathcal{O}(1)$ is base point free.
\end{proof}

We get a similar theorem for $\PBs\vert\mathcal{O}(1)\vert$. Before the statement, we firstly define algebraically globally generated. It is a weaker notion than the continuous globally generated  \cite{PP}.
\begin{defn}
The vector bundle $E$ of $\rank n$ is algebraically globally generated if
for any closed point $x$ of $X$, there exists $n$ algebraically trivial line bundles $\{L_{1}, \cdots, L_{n}\}$, and sections $s_{i}$ of $E\otimes L_{i}$ such that the local sections $\frac{s_{1}}{s'_{i}}, \cdots, \frac{s_{n}}{s'_{n}}$ generate the vector space $E_{x}$. Here $s'_{i}$ is non-vanishing local section of $L_{i}$ under some trivialization around $x$. Note that the generation property is independent of the choices of local trivializations.
\end{defn}

\begin{rem}
If $E$ is a line bundle, then $E$ is algebraically globally generated if and only if $\PBs\vert E\vert=\varnothing$.
\end{rem}

\begin{prop}\label{algdeform}
The following statements are equivalent,
\begin{enumerate}
    \item $\PBs\vert\mathcal{O}(1)\vert=\cap_{L\in \Pic^{0}(X)}\Bs\vert\mathcal{O}(1)\otimes p^{\ast}L\vert=\varnothing$.
    \item $E^{\vee}$ is algebraically globally generated.
\end{enumerate}
\end{prop}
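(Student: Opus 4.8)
The plan is to mimic the proof of Proposition~\ref{ratdeform}, replacing honest global sections with ``twisted'' sections coming from tensoring by algebraically trivial line bundles. The key observation is the projection-formula identity
\[
H^{0}(\mathbb{P}(E),\mathcal{O}(1)\otimes p^{\ast}L)\cong H^{0}(X,E^{\vee}\otimes L),
\]
valid for every $L\in\Pic^{0}(X)$, because $R^{0}p_{\ast}\mathcal{O}(1)=E^{\vee}$ and $L$ is pulled back from the base. Thus a section of $\mathcal{O}(1)\otimes p^{\ast}L$ not vanishing at a point $(x,v)$ corresponds exactly to a section of $E^{\vee}\otimes L$ whose value at $x$ does not annihilate $v$. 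Under this dictionary, the condition $\PBs\vert\mathcal{O}(1)\vert=\varnothing$ says that for every $(x,v)$ there is some $L\in\Pic^{0}(X)$ and a section of $E^{\vee}\otimes L$ not annihilating $v$; and algebraic global generation of $E^{\vee}$ says that at each $x$ the twisted sections span $E^{\vee}_{x}$. So the two statements are the local-to-global and spanning reformulations of the same data, and the argument is essentially the one already given in Proposition~\ref{ratdeform}.

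For $(2)\implies(1)$ the argument is immediate: given $(x,v)$, algebraic global generation of $E^{\vee}$ produces algebraically trivial $L_{1},\dots,L_{n}$ and sections $s_{i}$ of $E^{\vee}\otimes L_{i}$ whose local expressions span $E^{\vee}_{x}$; since they span, at least one $s_{j}$ has value at $x$ not killing the chosen $v$, and via the isomorphism above this is a section of $\mathcal{O}(1)\otimes p^{\ast}L_{j}$ not vanishing at $(x,v)$. Hence $(x,v)\notin\Bs\vert\mathcal{O}(1)\otimes p^{\ast}L_{j}\vert$, so the point lies outside the intersection defining $\PBs$, giving $\PBs\vert\mathcal{O}(1)\vert=\varnothing$.

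The direction $(1)\implies(2)$ requires the inductive hyperplane-avoidance construction of Proposition~\ref{ratdeform}, now carried out fiberwise. Fix $x$. Using $\PBs\vert\mathcal{O}(1)\vert=\varnothing$, choose $L_{1}$ and a section of $\mathcal{O}(1)\otimes p^{\ast}L_{1}$ whose restriction to $p^{-1}x$ cuts out a hyperplane $H_{1,x}\subset\mathbb{P}(E_{x})$; pick $v_{1}\in H_{1,x}$, then use the emptiness of $\PBs$ again at the point $(x,v_{1})$ to find $L_{2}$ and a section of $\mathcal{O}(1)\otimes p^{\ast}L_{2}$ defining $H_{2,x}$ with $v_{1}\notin H_{2,x}$, and continue inductively, at each stage choosing $v_{i}\in H_{1,x}\cap\dots\cap H_{i,x}$ and producing $H_{i+1,x}$ avoiding it. After $n$ steps the same linear-independence argument as before shows the resulting twisted local sections $\tfrac{s_{1}}{s'_{1}},\dots,\tfrac{s_{n}}{s'_{n}}$ are linearly independent in $E^{\vee}_{x}$, hence span it, which is exactly algebraic global generation of $E^{\vee}$.

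The one genuinely new technical point, and where I expect the only real care to be needed, is the passage between ``a section of $\mathcal{O}(1)\otimes p^{\ast}L$'' and ``a twisted local section $\tfrac{s}{s'}$ of $E^{\vee}$'' in the definition of algebraic global generation. One must check that trivializing $L$ near $x$ by a non-vanishing local section $s'$ turns a global section of $E^{\vee}\otimes L$ into a local section of $E^{\vee}$ near $x$, and that the spanning/non-vanishing condition at $x$ is independent of the trivialization chosen (it is, since changing $s'$ scales by a nonzero unit). The emptiness of $\PBs$ must be invoked at a fresh point $(x,v_{i})$ at every step of the induction, with a possibly different $L_{i}$ each time, which is precisely what makes the notion weaker than ordinary global generation; keeping track that the $n$ line bundles $L_{1},\dots,L_{n}$ are allowed to differ is the main bookkeeping subtlety, but poses no real obstacle.
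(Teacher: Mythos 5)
Your proposal is correct and follows exactly the route the paper takes: the paper's own proof consists of stating the twisted projection formula $H^{0}(\mathbb{P}(E),\mathcal{O}(1)\otimes p^{\ast}L)\cong H^{0}(X,E^{\vee}\otimes L)$ and declaring the rest ``essentially the same'' as Proposition~\ref{ratdeform}. You have simply written out the details that the paper leaves implicit, including the one genuine subtlety (that the $L_{i}$ may vary from step to step of the induction), which you handle correctly.
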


\begin{proof}
Given a line bundle $L$ on $X$, there is a projection formula,
$$H^{0}(\mathbb{P}(E),\mathcal{O}(1)\otimes p^{\ast}L)\cong H^{0}(X, E^{\vee}\otimes L).$$

The proof is essentially the same with the proof of Proposition \ref{ratdeform}.
\end{proof}

\begin{cor}\label{wedgeglobal}
If a vector bundle $E$ is algebraically globally generated, then $\wedge^{r}E$ is also algebraically globally generated. In particular, $\PBs\vert\det E\vert=\varnothing$.
\end{cor}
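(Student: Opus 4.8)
The plan is to argue directly from the definition of algebraically globally generated, reducing everything to a single fixed closed point $x\in X$. Since $E$ has rank $n$ and is algebraically globally generated, I obtain algebraically trivial line bundles $L_{1},\dots,L_{n}$ together with sections $s_{i}\in H^{0}(E\otimes L_{i})$ whose local quotients $\sigma_{i}:=s_{i}/s'_{i}$ generate the fiber $E_{x}$. As there are exactly $n$ of them and $\dim E_{x}=n$, these $\sigma_{i}$ are in fact a basis of $E_{x}$.

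Next I produce the candidate data for $\wedge^{r}E$ by taking wedge products. For each strictly increasing multi-index $J=(j_{1}<\cdots<j_{r})$ in $\{1,\dots,n\}$, the natural map $(E\otimes L_{j_{1}})\otimes\cdots\otimes(E\otimes L_{j_{r}})\to\wedge^{r}E\otimes L_{J}$, where $L_{J}:=L_{j_{1}}\otimes\cdots\otimes L_{j_{r}}$, sends $s_{j_{1}}\otimes\cdots\otimes s_{j_{r}}$ to a global section $s_{J}$ of $\wedge^{r}E\otimes L_{J}$. Because $\Pic^{0}(X)$ is a group, each $L_{J}$ is again algebraically trivial, and a non-vanishing local section of $L_{J}$ near $x$ is $s'_{J}:=s'_{j_{1}}\cdots s'_{j_{r}}$; the associated local quotient is $s_{J}/s'_{J}=\sigma_{j_{1}}\wedge\cdots\wedge\sigma_{j_{r}}$.

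Then I invoke the standard linear-algebra fact that if $\sigma_{1},\dots,\sigma_{n}$ is a basis of $E_{x}$, the wedge monomials $\sigma_{j_{1}}\wedge\cdots\wedge\sigma_{j_{r}}$, indexed by the increasing multi-indices $J$, form a basis of $(\wedge^{r}E)_{x}=\wedge^{r}(E_{x})$. The number of such $J$ is exactly $\binom{n}{r}=\rank(\wedge^{r}E)$, so the collection $\{(L_{J},s_{J})\}_{J}$ supplies precisely the right number of algebraically trivial line bundles and sections whose local quotients span the fiber; this verifies the definition and shows $\wedge^{r}E$ is algebraically globally generated. Specializing to $r=n$, the bundle $\det E=\wedge^{n}E$ is algebraically globally generated, and since it is a line bundle the Remark after the definition gives $\PBs\vert\det E\vert=\varnothing$.

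The argument is essentially bookkeeping, so I do not expect a serious obstacle; the only point needing care is the clause in the definition that the generation be independent of the chosen local trivializations. This is harmless here: replacing the trivialization of some $L_{i}$ rescales $\sigma_{i}$ by a nowhere-zero local function, which merely rescales each wedge monomial $\sigma_{j_{1}}\wedge\cdots\wedge\sigma_{j_{r}}$ by a nonvanishing factor and hence does not affect whether the monomials span $(\wedge^{r}E)_{x}$.
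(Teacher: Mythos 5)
Your proposal is correct and follows essentially the same route as the paper's own proof: wedge the given sections to get sections of $\wedge^{r}E$ twisted by the (still algebraically trivial) tensor products of the $L_{i}$, and observe that the resulting local quotients span $(\wedge^{r}E)_{x}$. If anything, your version is slightly more careful than the paper's, which twists by $\otimes_{1}^{n}L_{i}$ rather than by the product $L_{j_{1}}\otimes\cdots\otimes L_{j_{r}}$ depending on the multi-index.
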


\begin{proof}
Let $x$ be any closed point of $X$. Since $E$ is algebraically globally generated, there exists sections $s_{i}$ of $E\otimes L_{i}$ where $L_{i}$ are algebraically trivial line bundles such that $\langle\frac{s_{i}}{s'_{i}}\rangle$ generates $E_{x}$. Here $s'_{i}$ are the local non-vanishing sections of $L_{i}$. Construct the wedge product of sections

$$\{s_{i_{1}}\wedge s_{i_{2}}\wedge \cdots \wedge s_{i_{r}}\}_{i_{1}\textless i_{2}\textless \cdots \textless i_{r} }$$
which are sections of $\wedge^{r}E\otimes^{n}_{1} L_{i}$. After quotient the local non-vanishing section of $\otimes^{n}_{1}L_{i}$, they generate $\wedge^{r}E_{x}$.
\end{proof}

Turning back to $X'_{k}(\gamma)$, and we use the notations in Section 2. Observe that the property that $E^{\vee}$ is algebraically globally generated is stronger than that $\PBs\vert H\vert_{X'_{k}(\gamma)}\vert=\varnothing$, namely if $E^{\vee}$ is algebraically globally generated, then by Corollary \ref{wedgeglobal}, $\wedge^{r}E^{\vee}$ is also algebraically generated, and hence $\PBs\vert\mathcal{O}(1)\vert=\varnothing$, where $\mathcal{O}(1)$ is the universal bundle on $\mathbb{P}(\wedge^{r}E)$.

\par

Let $X'_{k}(\gamma)$ satisfies the assumption in Section 2. We translate the condition
$\PBs\vert H\vert_{X'_{k}(\gamma)}\vert=\varnothing$.

\begin{prop}
$\PBs\vert H\vert_{X'_{k}(\gamma)}\vert=\varnothing$ if the following is true: Given any closed point $y\in X'_{k}(\gamma)$, namely a pair $(x,W)$ such that
$x\in X'_{k}$ and $W\subset \ker \gamma_{x}$, $\dim W=n-k$, there is a section $s_{L}$ of $\wedge^{n-k}E^{\vee}\otimes L$ for some algebraically trivial line bundle $L$ such that
$\frac{s_{L}}{s'_{L}}(\det W)\ne 0$. Here $s'_{L}$ is the local non-vanishing section of $L$ around $x$.
\end{prop}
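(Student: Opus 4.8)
The plan is to connect the pointwise condition on sections of $\wedge^{n-k}E^{\vee}\otimes L$ to the emptiness of the para-canonical base locus of $H$ on $X'_{k}(\gamma)$, using the geometry of the Pl\"ucker embedding together with Proposition \ref{algdeform} and its corollary. First I would recall that $H\vert_{X'_{k}(\gamma)} = j^{\ast}\mathcal{O}(1)$ where $\mathcal{O}(1)$ is the universal hyperplane bundle on $\mathbb{P}(\wedge^{n-k}E)$, and that under the Pl\"ucker embedding $G(n-k,E)\hookrightarrow \mathbb{P}(\wedge^{n-k}E)$ the restriction of $\mathcal{O}(1)$ to $G(n-k,E)$ is precisely the determinant line bundle $\det S^{\vee} = \wedge^{n-k}S^{\vee}$ of the dual tautological bundle, matching the identity $j^{\ast}C_{1}(S^{\vee}) = H\vert_{X'_{k}(\gamma)}$ noted at the end of Section 2.

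The key translation step uses the projection formula from Proposition \ref{algdeform}: for an algebraically trivial line bundle $L$ on $X$ we have $H^{0}(\mathbb{P}(\wedge^{n-k}E),\mathcal{O}(1)\otimes q^{\ast}L)\cong H^{0}(X,\wedge^{n-k}E^{\vee}\otimes L)$, where $q$ is the structure map to $X$. Thus a global section $s_{L}$ of $\wedge^{n-k}E^{\vee}\otimes L$ corresponds to a section of $\mathcal{O}(1)\otimes q^{\ast}L$, and its restriction to $X'_{k}(\gamma)$ gives an element of $\vert H\otimes u^{\ast}L\vert$. Given a closed point $y=(x,W)$ with $W\subset\ker\gamma_{x}$ and $\dim W = n-k$, the point $y$ sits inside the fiber $p^{-1}(x)\cong G(n-k,E_{x})$, and under Pl\"ucker its image is the line $\det W\in\mathbb{P}(\wedge^{n-k}E_{x})$. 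I would then show that $y$ fails to be a base point of $H\otimes u^{\ast}L$ precisely when the corresponding section of $\mathcal{O}(1)\otimes q^{\ast}L$ does not vanish at $\det W$, which after trivializing $L$ near $x$ by the local non-vanishing section $s'_{L}$ is exactly the condition $\frac{s_{L}}{s'_{L}}(\det W)\neq 0$. Running over all $L\in\Pic^{0}(X)$ and all $y$, the hypothesis guarantees that no point of $X'_{k}(\gamma)$ lies in $\bigcap_{L}\Bs\vert H\otimes u^{\ast}L\vert$, which is the definition of $\PBs\vert H\vert_{X'_{k}(\gamma)}\vert$.

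The one subtlety I expect to be the main obstacle is the compatibility of base loci under restriction: a priori, emptiness of the base locus upstairs on $\mathbb{P}(\wedge^{n-k}E)$ does not immediately control the base locus of the restricted system on the subvariety $X'_{k}(\gamma)$, since restricting can introduce new base points if sections fail to survive restriction. Here the hypothesis is phrased exactly to avoid this: it asserts the existence, for each $y$, of a section whose value at $\det W$ is nonzero, and since $y\in X'_{k}(\gamma)$ maps to $\det W$ under $j$ followed by Pl\"ucker, nonvanishing at $\det W$ upstairs forces nonvanishing of the restricted section at $y$. So the argument is really a direct pointwise check rather than a general restriction theorem, and the proof reduces to carefully matching the three descriptions of the point $y$: as a pair $(x,W)$, as a point of the Grassmannian fiber, and as the Pl\"ucker line $\det W$. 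Once this dictionary is set up, the equivalence of the nonvanishing conditions is immediate and the proposition follows.
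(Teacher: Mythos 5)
Your proposal is correct and follows essentially the same route as the paper's own (much terser) proof: translate $s_{L}$ via the projection formula into a section of $\mathcal{O}(1)\otimes q^{\ast}L$, identify $y=(x,W)$ with the Pl\"ucker point $\det W$, and observe that nonvanishing there means the restricted section does not vanish at $y$, so $y\notin\Bs\vert H\otimes u^{\ast}L\vert$ and hence $y\notin\PBs\vert H\vert_{X'_{k}(\gamma)}\vert$. The ``subtlety'' you flag about restriction is resolved exactly as you say, and exactly as in the paper: the hypothesis is a pointwise nonvanishing statement at the image of $y$, so no general restriction theorem is needed.
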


\begin{proof}
The condition is saying that given any closed point $(x,W)$, there always exists an effective divisor $H_{L}$ which are algebraically equivalent to $\mathcal{O}(1)$ such that $(x,W)\notin H_{L}$. The section $H_{L}\vert_{X'_{k}(\gamma)}$ does not vanish at $(x,W)$, therefore $\PBs\vert H\vert_{X'_{k}(\gamma)}\vert=\varnothing$.
\end{proof}

\begin{thm}\label{indecompoA}
Assume that $\rank F\geq \rank E$, $E^{\vee}$ is algebraically globally generated, and
$$\PBs \vert -C_{1}(X)+(n-k)\cdot (C_{1}(F)-C_{1}(E))\vert=\varnothing.$$
Then $D^{b}(X'_{k}(\gamma))$ is indecomposable.
\end{thm}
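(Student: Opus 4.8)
The plan is to reduce the indecomposability of $D^{b}(X'_{k}(\gamma))$ to a statement about the para-canonical base locus of its canonical class, and then invoke the criterion of \cite{lin2021nonexistence}: for a smooth projective variety $Y$, if $\PBs\vert\omega_{Y}\vert$ has dimension $\le 0$ (in particular if it is empty) then $D^{b}(Y)$ is indecomposable. Recall that $X'_{k}(\gamma)$ is smooth projective by the standing assumption of Section 2, and that $-C_{1}(X'_{k}(\gamma))$ is exactly the class of the canonical bundle $\omega_{X'_{k}(\gamma)}$. Hence it suffices to prove
$$\PBs\bigl\vert -C_{1}(X'_{k}(\gamma))\bigr\vert=\varnothing.$$
I would begin from the formula of Proposition \ref{Chernd}, which exhibits this class as a sum
$$-C_{1}(X'_{k}(\gamma))=a\cdot H\vert_{X'_{k}(\gamma)}+u^{\ast}M,\qquad a:=\rank F-\rank E\ge 0,\quad M:=-C_{1}(X)+(n-k)(C_{1}(F)-C_{1}(E)),$$
and then treat the para-canonical base locus of each summand separately before combining them.

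For the term $u^{\ast}M$: the hypothesis $\PBs\vert M\vert=\varnothing$ means that for every $x\in X$ there are $L\in\Pic^{0}(X)$ and a section of $M\otimes L$ not vanishing at $x$. Pulling back along $u$ and taking $x=u(y)$, I obtain for each $y\in X'_{k}(\gamma)$ a section of $u^{\ast}M\otimes u^{\ast}L$ not vanishing at $y$; since $u^{\ast}L\in\Pic^{0}(X'_{k}(\gamma))$, this yields $\PBs\vert u^{\ast}M\vert=\varnothing$. For the term $a\cdot H\vert_{X'_{k}(\gamma)}$: the assumption that $E^{\vee}$ is algebraically globally generated gives, by Corollary \ref{wedgeglobal} with $r=n-k$, that $\wedge^{n-k}E^{\vee}$ is algebraically globally generated, which by the preceding Proposition is precisely $\PBs\vert H\vert_{X'_{k}(\gamma)}\vert=\varnothing$. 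Concretely, for each $y=(x,W)$ there are $L'\in\Pic^{0}(X)$ and a section $s_{1}$ of $H\vert_{X'_{k}(\gamma)}\otimes u^{\ast}L'$ not vanishing at $y$; the power $s_{1}^{\otimes a}$ is then a section of $a\cdot H\vert_{X'_{k}(\gamma)}\otimes u^{\ast}(L'^{\otimes a})$ not vanishing at $y$, so this summand also has empty para-canonical base locus (when $a=0$ the term is absent and only $u^{\ast}M$ is used, which is why $\rank F\ge\rank E$ is needed).

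Finally I would combine the two constructions. Fixing $y$, I take the section $s_{1}^{\otimes a}$ of $a\,H\vert_{X'_{k}(\gamma)}\otimes u^{\ast}(L'^{\otimes a})$ together with a section $s_{2}$ of $u^{\ast}M\otimes u^{\ast}L$, both non-vanishing at $y$; their product $s_{1}^{\otimes a}\otimes s_{2}$ is a section of
$$\bigl(a\,H\vert_{X'_{k}(\gamma)}+u^{\ast}M\bigr)\otimes u^{\ast}\bigl(L'^{\otimes a}\otimes L\bigr)=\bigl(-C_{1}(X'_{k}(\gamma))\bigr)\otimes u^{\ast}\bigl(L'^{\otimes a}\otimes L\bigr)$$
that does not vanish at $y$. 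Since $u^{\ast}(L'^{\otimes a}\otimes L)\in\Pic^{0}(X'_{k}(\gamma))$, the point $y$ lies outside $\PBs\vert -C_{1}(X'_{k}(\gamma))\vert$, and letting $y$ vary gives the required vanishing, whence indecomposability. The step I expect to be the main obstacle is exactly this last combination: para-canonical base loci do not a priori satisfy an inclusion $\PBs\vert A+B\vert\subseteq\PBs\vert A\vert\cup\PBs\vert B\vert$, since the two summands may a priori demand twists by different elements of $\Pic^{0}$. The resolution is to observe that both hypotheses produce sections twisted only by line bundles pulled back from $X$, so the two twists $u^{\ast}(L'^{\otimes a})$ and $u^{\ast}L$ multiply inside $u^{\ast}\Pic^{0}(X)\subseteq\Pic^{0}(X'_{k}(\gamma))$ into a single genuine algebraic twist of the canonical class; verifying this compatibility is the heart of the argument.
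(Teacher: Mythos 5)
Your proposal is correct and follows essentially the same route as the paper: reduce to $\PBs\vert -C_{1}(X'_{k}(\gamma))\vert=\varnothing$ via the criterion of \cite{lin2021nonexistence}, handle the $H$-term through Corollary \ref{wedgeglobal} and Proposition \ref{algdeform}, and handle the $u^{\ast}M$-term by pulling back. The paper simply compresses the subadditivity of $\PBs$ under sums of classes with non-negative coefficients (your ``combination'' step, which indeed works because twists multiply inside $\Pic^{0}$) into the citation of \cite[Corollary 1.6]{lin2021nonexistence}, whereas you spell it out explicitly.
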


\begin{proof}
According to Proposition \ref{Chernd} and \cite[Corollary 1.6]{lin2021nonexistence}, we only need that $\PBs\vert H_{X'_{k}(\gamma)}\vert=\varnothing$, where $H$ is the relative Hyperplane of $\mathcal{O}(1)$ on $\mathbb{P}(\wedge^{n-k}E)$. If $E^{\vee}$ is algebracically generated, then by Corollary \ref{wedgeglobal}, $\wedge^{n-k}E$ is also algebraically globally generated. Therefore, by Theorem \ref{algdeform}, we have $\PBs\vert H\vert=\varnothing$, and then $\PBs\vert H\vert_{X'_{k}(\gamma)}\vert=\varnothing$.
\end{proof}

\begin{lem}
Let $X$ be an abelian variety. If a class $[L]$ of $\NS(X)$ is represented by an effective divisor, then $\PBs\vert[L]\vert=\varnothing$.
\end{lem}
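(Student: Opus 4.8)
The plan is to exploit the homogeneity of the abelian variety $X$, namely that translating a line bundle by a point of $X$ does not change its class in $\NS(X)$. Since, as observed earlier, the set $\PBs$ depends only on the algebraic class, I would choose the effective representative at the outset: fix an effective divisor $D$ with $\cO(D)\in[L]$ and a section $s\in H^{0}(X,\cO(D))$ whose zero locus is $D$. It then suffices to show that for every closed point $x\in X$ there exist $M\in\Pic^{0}(X)$ and a section of $\cO(D)\otimes M$ not vanishing at $x$; for then $x\notin\Bs\vert\cO(D)\otimes M\vert$, and hence $x\notin\PBs\vert[L]\vert$.

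To manufacture such sections I would translate $D$. For $a\in X$ let $t_{a}\colon X\to X$ be translation by $a$. The pullback $t_{a}^{\ast}s$ is a global section of $t_{a}^{\ast}\cO(D)$ whose zero locus is the translated divisor $t_{a}^{-1}(D)=D-a$. The crucial input is the standard fact (the theorem of the square) that the assignment $a\mapsto t_{a}^{\ast}\cO(D)\otimes\cO(D)^{-1}$ maps $X$ into $\Pic^{0}(X)$; consequently $t_{a}^{\ast}\cO(D)=\cO(D)\otimes M_{a}$ for some $M_{a}\in\Pic^{0}(X)$, so every translate $D-a$ is an effective divisor whose class is still $[L]$.

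It remains to arrange that the translate avoids $x$, which is a pure dimension count. Since $D$ is an effective divisor it is a proper Zariski-closed subset of $X$, and so is the translate $D-x$; picking any $a\notin D-x$ yields $x+a\notin D$, that is, $x\notin D-a$. For this $a$ the section $t_{a}^{\ast}s$ of $\cO(D)\otimes M_{a}$ does not vanish at $x$, which establishes the displayed sufficient condition and hence $\PBs\vert[L]\vert=\varnothing$.

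The only nontrivial ingredient is the translation-invariance of the $\NS$-class, which is precisely the theorem of the square for abelian varieties; everything else reduces to the fact that a divisor cannot fill up all of $X$, so I do not anticipate any serious obstacle.
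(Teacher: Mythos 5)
Your proof is correct, but it takes a different route from the paper's. The paper writes the effective representative as $D=\sum a_{i}P_{i}$ with $a_{i}\geq 0$ and $P_{i}$ prime, invokes \cite[Corollary 4.4]{lin2021nonexistence} to get $\PBs\vert P_{i}\vert=\varnothing$ for each prime divisor on an abelian variety, and multiplies the resulting sections together; the argument is short but outsources the essential content to the cited corollary. You instead give a self-contained argument: by the theorem of the square the translate $t_{a}^{\ast}\cO(D)$ differs from $\cO(D)$ by an element of $\Pic^{0}(X)$, so every translated divisor $D-a$ is an effective representative of an algebraically equivalent bundle, and since $D$ is a proper closed subset one can always translate it off any prescribed point $x$. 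This makes the homogeneity of $X$ --- which is ultimately what drives the cited corollary as well --- explicit in the proof, at the cost of a few more lines; it also dispenses with the reduction to prime components entirely, since the translation argument applies to $D$ as a whole. The only point worth making explicit is the degenerate case $D=0$, where $\cO_{X}$ has a nowhere-vanishing section and the conclusion is immediate; otherwise your dimension count and the verification $a\notin D-x\iff x\notin D-a$ are both sound.
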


\begin{proof}
Assume that an effective divisor $D$ represents $[L]$. Write $D=\sum a_{i}\cdot P_{i}$ where $P_{i}$ are the prime divisors, and $a_{i}$ are non-negative integers. According to
\cite[Corollary 4.4]{lin2021nonexistence}, $\PBs\vert P_{i}\vert=\varnothing$, therefore $\PBs\vert D\vert=\varnothing$. Since the para-canonical base point is independent of choice of line bundle in a class, we have $\PBs\vert [L]\vert=\varnothing$.
\end{proof}

\begin{cor}\label{indecompoB}
Let $X$ be an abelian variety, and $(E,F,\gamma)$ be vector bundle on $X$ with morphism $\gamma$. Assume $X'_{k}(\gamma)$ is smooth and of expected dimension, $\rank F\geq \rank E$, $C_{1}(F)-C_{1}(E)$ can be represented by an effective divisor, and $E^{\vee}$ is algebraically globally generated, then $D^{b}(X'_{k}(\gamma))$ is indecomposable.
\end{cor}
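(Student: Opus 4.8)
The plan is to verify the three hypotheses of Theorem \ref{indecompoA} and then invoke it directly. Two of them, namely $\rank F \geq \rank E$ and the algebraic global generation of $E^{\vee}$, are assumed outright in the statement, so the only thing left to check is the paracanonical base-point-freeness of the class
$-C_{1}(X) + (n-k)\cdot(C_{1}(F) - C_{1}(E))$
appearing in the hypothesis of Theorem \ref{indecompoA}.

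The key observation is that $X$ is an abelian variety, so its canonical bundle is trivial and $C_{1}(X) = 0$. Hence the class in question reduces to $(n-k)\cdot(C_{1}(F) - C_{1}(E))$. Since $n-k$ is the dimension of the subspaces $W$ parametrized by $X'_{k}(\gamma)$, it is a nonnegative integer, and by hypothesis $C_{1}(F) - C_{1}(E)$ is represented by an effective divisor $D$; therefore $(n-k)\cdot D$ is again effective and represents the class. Applying the preceding Lemma to the abelian variety $X$ and the class $[(n-k)D]$ in $\NS(X)$ then yields $\PBs\vert (n-k)\cdot(C_{1}(F) - C_{1}(E))\vert = \varnothing$, which is exactly the third hypothesis of Theorem \ref{indecompoA}.

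With all three hypotheses verified, Theorem \ref{indecompoA} applies and we conclude that $D^{b}(X'_{k}(\gamma))$ is indecomposable. The argument is essentially a bookkeeping reduction, and the only nontrivial input beyond the given assumptions is the vanishing of the canonical class on an abelian variety, which is what collapses the general paracanonical condition of Theorem \ref{indecompoA} into the purely effectivity condition handled by the Lemma. I therefore do not expect any genuine obstacle here once that observation is in place; the proof is a direct specialization of the general theorem to the abelian setting.
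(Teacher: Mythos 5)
Your proposal is correct and is exactly the argument the paper intends: the corollary is stated without proof immediately after the Lemma, precisely because $C_{1}(X)=0$ on an abelian variety reduces the paracanonical hypothesis of Theorem \ref{indecompoA} to the effectivity of $(n-k)\cdot(C_{1}(F)-C_{1}(E))$, which the Lemma handles. No gaps.
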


	\section{The Brill-Noether variety}
	In this section, we study the scheme $G_d^r(C)$ parametrizing degree $d$ dimension $r$ linear series on a general algebraic curve $C$ of genus $g$. We prove that $D^b(G_d^r(C))$ is indecomposible for general curve $C$ if $d\leq g-1$.
	
	First we recall the construction of $G_d^r(C)$. Let $X=\Pic^{d}(C)$ be the Picard variety parametrizing degree $d$ line bundles on $C$. There exists a Poincar\'e line bundle $\cL$ on $C\times X$ such that for each element $L\in X$, the restriction $\cL|_{C\times \{L\}}$ is exactly $L$ under the isomorphism $C\times{L}\cong C$. Fix an effective divisor $D$ on $C$ with degree $2g-d-1$. Let $\Gamma=D\times X$ be an effective divisor on $C\times X$. Denote by $\nu$ the projection $C\times X\to X$. Then $E=\nu_* \cL(\Gamma)$ and $F=\nu_*( \cL(\Gamma)/\cL)$ are vector bundles on $X$ with $\rank{E}=g$ and $\rank{F}=2g-d-1$. There is an natural morphism $\gamma\colon E\to F$ induced by $\cL(\Gamma)\to \cL(\Gamma)/\cL$. Then $G_d^r(C)=X^\prime_{g-r-1}(\gamma)$. Griffiths and Harris \cite{griffiths1980variety} showed that each component of $G_d^r(C)$ has expected dimension $\rho= g-(r+1)(g-d+r)$ for generic curve $C$. Gieseker \cite{gieseker1982stable} proved that $G_d^r(C)$ is smooth for generic curve $C$. Noted that the Poincar\'e bundle $\cL$ is uniquely determined up twist by pull back of line bundles $\cR$ on $X$. The twist of $\nu^*\cR$ does change $E$ but does not change the variety $G_d^r(C)$. For any point $p\in C$, let $i_p$ be the section $X\to C\times X$ defined by $L\mapsto (p, L)$. Denote by $L_p=i_p^*(\cL(\Gamma))$. Here we may assume that for some point $p_0\in C$, the bundle $L_{p_0}\cong \cO_{X}$. Otherwise we replace $\cL$ by $\cL\otimes \nu^*(L_{p_0})^{-1}$. In order to apply our theorem \ref{indecompoA} to $G_d^r(C)$, we show the following.
	
	\begin{prop}
		\label{prop: global generation of Poincare bundle}
		Under the assumption $L_{p_0}\cong \cO_{X}$, the vector bundle $E^\vee$ constructed above is algebraically globally generated.
	\end{prop}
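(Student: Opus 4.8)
The plan is to produce, for each closed point $x=[A]\in X=\Pic^{d}(C)$, a collection of $g$ algebraically trivial line bundles together with twisted sections of $E^{\vee}$ that realize the evaluation functionals at $g$ suitably chosen points of $C$. The whole argument rests on two observations: that each restriction bundle $L_{p}$ is algebraically trivial once we normalize $L_{p_{0}}\cong\cO_{X}$, and that evaluation of sections at the points of $C$ separates global sections. Throughout I write $A(D)=A\otimes\cO_{C}(D)$, so that the fiber of $E$ at $[A]$ is $H^{0}(C,A(D))$, which has dimension $g$ by Riemann--Roch.

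First I would verify that $L_{p}=i_{p}^{\ast}(\cL(\Gamma))$ is algebraically trivial for every $p\in C$. Since $\Gamma=D\times X$ is the preimage of $D$ under the projection $\mathrm{pr}_{C}\colon C\times X\to C$, we have $\cL(\Gamma)\cong\cL\otimes\mathrm{pr}_{C}^{\ast}\cO_{C}(D)$, and $i_{p}^{\ast}\mathrm{pr}_{C}^{\ast}\cO_{C}(D)\cong\cO_{X}$ because $\mathrm{pr}_{C}\circ i_{p}$ is constant; hence $L_{p}\cong i_{p}^{\ast}\cL$. The sections $i_{p}$ of $\nu$ vary in an algebraic family over the connected curve $C$, so the line bundles $i_{p}^{\ast}\cL$ are pairwise algebraically equivalent; thus $L_{p}\otimes L_{p_{0}}^{-1}\in\Pic^{0}(X)$, and the normalization $L_{p_{0}}\cong\cO_{X}$ yields $L_{p}\in\Pic^{0}(X)$, i.e.\ $L_{p}$ is algebraically trivial.

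Next I would build the evaluation morphisms. Restricting sections to the divisor $\{p\}\times X$ and pushing forward along $\nu$ (which is an isomorphism on that divisor) produces a map $\mathrm{ev}_{p}\colon E=\nu_{\ast}\cL(\Gamma)\to L_{p}$; by cohomology and base change its fiber at $x=[A]$ is the honest evaluation $H^{0}(C,A(D))\to (A(D))_{p}$. Viewing $\mathrm{ev}_{p}$ as an element of $\operatorname{Hom}(E,L_{p})=H^{0}(X,E^{\vee}\otimes L_{p})$ exhibits it as a section $s_{p}$ of $E^{\vee}$ twisted by the algebraically trivial $L_{p}$, exactly the data appearing in the definition of algebraically globally generated. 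Dividing by a local non-vanishing section $s'_{p}$ of $L_{p}$ near $x$, the value $\frac{s_{p}}{s'_{p}}(x)\in E^{\vee}_{x}=H^{0}(C,A(D))^{\vee}$ is, up to a nonzero scalar, the evaluation functional at $p$.

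Finally I would check the generation condition. A section $s\in H^{0}(C,A(D))$ lying in the common kernel of all $\mathrm{ev}_{p}$ vanishes at every point of $C$, hence $s=0$; therefore the functionals $\{\mathrm{ev}_{p,x}\}_{p\in C}$ span the $g$-dimensional space $E^{\vee}_{x}$. Choosing $g$ points $p_{1},\dots,p_{g}$ whose evaluation functionals form a basis, the algebraically trivial line bundles $L_{p_{1}},\dots,L_{p_{g}}$ and the sections $s_{p_{1}},\dots,s_{p_{g}}$ witness that $E^{\vee}$ is algebraically globally generated at $x$; since $x$ is arbitrary, the proposition follows. I expect the main obstacle to be the careful bookkeeping in identifying $\mathrm{ev}_{p}$ with a twisted global section and verifying its base-change compatibility on fibers, whereas the algebraic triviality of $L_{p}$ and the spanning argument are comparatively routine.
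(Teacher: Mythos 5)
Your proposal is correct and follows essentially the same route as the paper: both normalize so that $L_{p}\in\Pic^{0}(X)$, realize the evaluation-at-$p$ maps as sections $s_{p}$ of $E^{\vee}\otimes L_{p}$, and conclude by showing the evaluation functionals span the $g$-dimensional fiber $H^{0}(C,A(D))^{\vee}$. The only difference is cosmetic: where the paper proves by induction that the matrix $\{v_{i}(p_{j})\}$ is invertible for general point tuples, you observe directly that the common kernel of all evaluations is zero and extract a basis, which is a legitimate shortcut.
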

	
	\begin{proof}
		The Poincar\'e line bundle $\cL(\Gamma)$ defines a morphism $C\to \Pic(X)$ by sending $p$ to $L_p$. Since $L_{p_0}\cong \cO_X$, we have $L_p\in \Pic^0(X)$ for all $p\in C$. For any point $p\in C$, we construct a section $s_p$ of $E^\vee\otimes L_p$ as follows. Since $i_p^*(\cL(\Gamma)\otimes \nu^*(L_p^{-1}))\cong \cO_X$, we can choose a trivializing section $t_p$ of $\cL(\Gamma)\otimes \nu^*(L_p^{-1})|$ on ${\{p\}\times X}$. Construct the section $s_p\colon E\otimes L_p^{-1} \to  \mathbb{C}$ on $X$ pointwisely by
		\begin{eqnarray*}
			s_p|_L \colon H^0(C, L(D))&\to& \mathbb{C}    \\
			v &\mapsto& {v|_p\over t_p|_{(p,L)}} .
		\end{eqnarray*}
		Let $a_i$ a nonzero section of $L_p$ at point $L$. We claim that for generic $(p_1,\cdots,  p_g)\in C_g$, the sections $(s_{p_1}|_L)\otimes a_1, \cdots , (s_{p_g}|_L)\otimes a_g$ form a basis of $E^\vee|_L$. The claim follows from the following lemma:
		\begin{lem}
			\label{lemma: linear indepdence of sections on points}
			Assume $v_0, v_1,\cdots, v_r$ are linearly independent sections of a line bundle $L$ over an algebraic variety $C$. Fixing a nonzero section $b_p$ of $L$ at each point $p\in C$. Denote by $v_i(p)={v_i\over b_p}\in k$. Then for general points $(p_0, p_1, \cdots, p_r)\in C^{(r+1)}$, the matrix $\{v_i(p_j)\}_{(r+1)\times (r+1)}$ is invertible. (The invertibility does not depend on the choice of $b_p$.)
		\end{lem}
		
		\begin{proof}
			We prove the lemma by induction on $r$. When $r=0$, $v_0(p)\neq 0$ for a generic point $p\in C$. Assume the lemma is true for $r\geq 0$. Consider $r+2$ linearly independent sections $v_0, v_1,\cdots, v_r, v_{r+1}$. For general points $(p_0, p_1, \cdots, p_r)\in C^{(r+1)}$, denote by $B=\{v_i(p_j)\}_{0\leq i \leq r, 0\leq j\leq r}$ an invertible matrix . Let
			$$v_{r+1}^\prime= v_{r+1}-(v_{r+1}(p_0), \cdots, v_{r+1}(p_r))B^{-1}(v_0,\cdots, v_r)^T.$$
			Then $v_{r+1}^\prime \neq 0$ because $v_0, v_1,\cdots, v_r, v_{r+1}$ are linearly independent, hence $v_{r+1}^\prime(p_{r+1})\neq 0$ for generic $p_{r+1}\in C$. On the other hand, we have $v_{r+1}^\prime(p_i)=0$ for any $0\leq i\leq r$. Replacing $v_{r+1}$ by $v_{r+1}^\prime$ does not change the invertibility of $\{v_i(p_j)\}_{0\leq i \leq r+1, 0\leq j\leq r+1}$. So the statement is proved for $r+1$.
		\end{proof}
		
		Apply the lemma to $r=\dim H^0(C, L)-1$, we obtain the claim and hence the proposition is proved.
		
	\end{proof}
	
	\begin{rem}
		When $r=0$, the variety $G_d^0(C)=C_d$ and from the construction of section $s_p$, we can conclude that the zero locus of $s_p$ is the image of $C_{d-1}\to C_d$ via map $D\mapsto D+p$. Combining with the Chern class formula we obtain that $-C_1(C_d)=[\Theta]+(g-d-1)[C_{d-1}]$. This recovers results in \cite[Lemma 2.1]{BGL}.
	\end{rem}
	
	\begin{rem}
	    The bundle $E^\vee$ is not globally generated. Otherwise, $\det E^\vee\cong \cO_X(\Theta)$ is also globally generated, where $\Theta$ is theta divisor on $X$. This contradicts with the fact that $\dim H^0(X, \cO_X(\Theta))=1$ and $ \cO_X(\Theta)$ has $\Theta$ as base locus.
	\end{rem}
	
	\begin{thm}
		\label{theorem: Brill-Noether indecomposible}
		For a generic curve $C$, the derived category $D^b(G_d^r(C))$ is indecomposable for $d\leq g-1$.
	\end{thm}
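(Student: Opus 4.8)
The plan is to deduce the theorem directly from Corollary \ref{indecompoB}, applied to the abelian variety $X=\Pic^{d}(C)$ together with the triple $(E,F,\gamma)$ built from the Poincar\'e bundle, so that the indecomposability of $D^{b}(G_{d}^{r}(C))=D^{b}(X'_{g-r-1}(\gamma))$ is immediate once its four hypotheses are checked. Three of them are essentially already available: $X=\Pic^{d}(C)$ is an abelian variety; $X'_{g-r-1}(\gamma)=G_{d}^{r}(C)$ is smooth and of the expected dimension $\rho$ for a generic curve by the theorems of Griffiths--Harris and Gieseker recalled above; and $E^{\vee}$ is algebraically globally generated by Proposition \ref{prop: global generation of Poincare bundle}. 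It therefore remains to verify the rank inequality and the effectivity of $C_{1}(F)-C_{1}(E)$.

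The rank condition is exactly where the hypothesis $d\leq g-1$ is consumed. Since $\rank E=g$ and $\rank F=2g-d-1$, the requirement $\rank F\geq \rank E$ of Corollary \ref{indecompoB} reads $2g-d-1\geq g$, that is $d\leq g-1$; this is the assumed bound, and it fails otherwise. Thus the bound on $d$ plays no role beyond ensuring $\rank F\geq\rank E$, which is the inequality governing the sign of the relative hyperplane term in the Chern class formula of Proposition \ref{Chernd}.

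For the remaining hypothesis I would compute $C_{1}(F)-C_{1}(E)$ and identify it with the theta class. Pushing the short exact sequence $0\to \cL\to \cL(\Gamma)\to \cL(\Gamma)/\cL\to 0$ forward along $\nu$ gives, in the $K$-theory of $X$, the identity $E-F=\nu_{!}\cL$: the higher direct images drop out because $\cL(\Gamma)$ has fibre degree $2g-1>2g-2$ and $\cL(\Gamma)/\cL$ is supported on the finite cover $\Gamma\to X$. Hence $C_{1}(F)-C_{1}(E)=-C_{1}(\nu_{!}\cL)$, and a Grothendieck--Riemann--Roch computation, using the Poincar\'e decomposition $c_{1}(\cL)=d\,[\{pt\}\times X]+\delta+\nu^{\ast}\beta$ with $\delta\in H^{1}(C)\otimes H^{1}(X)$ and the standard identity $\nu_{\ast}(\delta^{2})=-2\theta$, yields $C_{1}(F)-C_{1}(E)=\theta+(g-1-d)\beta$, where $\theta=[\Theta]$ is the principal polarization. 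The normalization $L_{p_{0}}\cong \cO_{X}$ forces $\beta=0$ (restricting $c_{1}(\cL)$ along $i_{p_{0}}$ annihilates the first two summands and returns $\beta$), so $C_{1}(F)-C_{1}(E)=[\Theta]$, which is effective.

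By the lemma preceding Corollary \ref{indecompoB}, effectivity of this class on the abelian variety $X$ suffices, and invoking that corollary then concludes the proof. The one genuinely computational point is this identification of $C_{1}(F)-C_{1}(E)$ with $[\Theta]$ --- and in particular the verification that the chosen normalization cancels the correction term $(g-1-d)\beta$ --- so I expect that to be the main obstacle, although it is classical (compare the $r=0$ remark above and \cite{arbarello2013geometry}); everything else is assembling hypotheses already established.
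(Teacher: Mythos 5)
Your proposal is correct and follows essentially the same route as the paper: both verify the hypotheses of Corollary \ref{indecompoB} for $X=\Pic^{d}(C)$, using the rank count $2g-d-1\geq g$ (which is where $d\leq g-1$ enters), Proposition \ref{prop: global generation of Poincare bundle} for the algebraic global generation of $E^{\vee}$, and the identification $C_{1}(F)-C_{1}(E)=[\Theta]$. The only difference is cosmetic: you derive that last identity by Grothendieck--Riemann--Roch applied to $\nu_{!}\cL$, whereas the paper simply quotes $C_{1}(E)=-[\Theta]$ and the (algebraic) triviality of $F=\bigoplus_{p\in D}L_{p}$; both computations agree.
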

	
	\begin{proof}
		The first Chern class of $E$ is $C_1(E)=-[\Theta]$, where $\Theta$ is the theta divisor on $X$ and $E$ has rank $g$. The vector bundle $F$ is trivial a vector bundle of rank $2g-d-1$. So $\rank{F}\geq \rank{E}$ for $d\leq g-1$. And $-C_1(X)+(r+1)(C_1(F)-C_1(E))=(r+1)[\Theta]$ is an effective class. Proposition \ref{prop: global generation of Poincare bundle} implies that $E^\vee$ is algebraically globally generated. So $D^b(G_d^r(C))$ is indecomposable for $d\leq g-1$ according to Corollary \ref{indecompoB}.
		
	\end{proof}

\end{document}